
\documentclass[reqno,a4paper]{amsart}
\usepackage{amssymb}
\usepackage{amsmath}
\usepackage{amsfonts}

\setcounter{MaxMatrixCols}{10}

\topmargin=-1cm
\oddsidemargin=0.3cm
\evensidemargin=0.3cm
\textheight=24cm
\textwidth=16cm
\newtheorem{theorem}{Theorem}[section]
\theoremstyle{plain}

\newtheorem{corollary}{Corollary}[section]

\newtheorem{definition}{Definition}[section]

\newtheorem{lemma}{Lemma}[section]

\numberwithin{equation}{section}
\input{tcilatex}

\begin{document}
\title[A new approach on helices in Euclidean $n-$space]{A new approach on
helices in Euclidean $n-$space}
\author{Ali \c{S}enol}
\address{Department of Mathematics, Faculty of Science, \c{C}ank\i r\i\
Karatekin University, \c{C}ank\i r\i , Turkey}
\email{asenol@karatekin.edu.tr}
\urladdr{}
\author{Evren Z\i plar}
\address{Department of Mathematics, Faculty of Science, University of
Ankara, Tando\u{g}an, Turkey}
\email{evrenziplar@yahoo.com}
\author{Yusuf Yayl\i }
\address{Department of Mathematics, Faculty of Science, University of
Ankara, Tando\u{g}an, Turkey}
\email{yayli@science.ankara.edu.tr}
\author{\.{I}smail G\"{O}K}
\address{Department of Mathematics, Faculty of Science, University of
Ankara, Tando\u{g}an, Turkey}
\email{igok@science.ankara.edu.tr}
\urladdr{}
\date{August 12, 2012.}
\subjclass[2000]{14H45, 14H50, 53A04}
\keywords{Inclined curve, slant helices, harmonic curvature\\
Corresponding author: Evren ZIPLAR, evrenziplar@yahoo.com}

\begin{abstract}
In this work, we give some new characterizations for inclined curves and
slant helices in $n$-dimensional Euclidean space $E^{n}.$ Morever, we
consider the pre-characterizations about inclined curves and slant helices
and reconfigure them.
\end{abstract}

\maketitle

\section{Introduction}

The helices share common origins in the geometries of the platonic solids,
with inherent hierarchical potential that is typical of biological
structures.The helices provide an energy-efficient solution to close-packing
in molecular biology, a common motif in protein construction, and a readily
observable pattern at many size levels throughout the body. The helices are
described in a variety of anatomical structures, suggesting their importance
to structural biology and manual therapy \cite{scarr}.

In \cite{haci}$,$ \"{O}zdamar and Hac\i saliho\u{g}lu defined harmonic
curvature functions $H_{i}$ $\left( 1\leq i\leq n-2\right) $ of a curve $%
\alpha $ in $n-$dimensional Euclidean space $E^{n}$. They generalized
inclined curves in $E^{3}$ to $E^{n}$ and then gave a characterization for
the inclined curves in $E^{n}:$

\begin{equation}
\text{\textquotedblleft A curve }\alpha \text{ is an inclined curve if and
only if }\sum\limits_{i=1}^{n-2}H_{i}^{^{2}}=\text{constant%
\textquotedblright }.  \label{1.1}
\end{equation}%
Then, Izumiya and Takeuchi defined a new kind of helix (slant helix) and
they gave a characterization of slant helices in Euclidean $3-$space $E^{3}$ 
\cite{izumiya}. In 2008, \"{O}nder \emph{et al}. defined a new kind of slant
helix in Euclidean $4-$space $E^{4}$ which is called $B_{2}-$slant helix and
they gave some characterizations of these slant helices in Euclidean $4-$%
space $E^{4}$ \cite{onder} . And then in 2009, G\"{o}k \emph{et al}.defined
a new kind of slant helix in Euclidean $n-$space $E^{n}$, $n>3$, which they
called $V_{n}-$slant helix and they gave some characterizations of these
slant helices in Euclidean $n-$space \cite{gok}. The new kind of helix is
generalization of $B_{2}-$slant helix to Euclidean $4-$space $E^{4}.$ On the
other hand, Camc\i\ \emph{et al.}give some characterizations for a
non-degenerate curve to be a generalized helix by using its harmonic
curvatures \cite{cam}.

Since \"{O}zdamar and Hac\i saliho\u{g}lu defined harmonic curvature
functions, lots of authors have used them in their papers for
characterization of inclined curves and slant helices. In these studies,
they gave some characterizations similar to $\left( 1.1\right) $ for
inclined curves and slant helices. But, Camc\i\ \emph{et al}.see for the
first time that the characterization of inclined curves in $\left(
1.1\right) $ is true for the case necessity but not true for the case
sufficiency and gave an example of inclined curve in order to show why the
case sufficiency is not true \cite{cam}. Also, they gave a characterization
of inclined curves (Theorem 3.3, pp.2594) with only necessary condition \cite%
{cam}. But, they did not obtain when the characterization has sufficiency
case. And then, G\"{o}k \emph{et al}.\cite{gok} corrected the
characterization of $B_{2}-$slant helix (Theorem 3.1, pp.1436,in \cite{onder}%
) like the characterization in $\left( 1.1\right) $. But, they also did not
give the answer of the question: When the characterization has sufficiency
case?. After them, Ahmad and Lopez gave the definition of $G_{i}$ $\left(
1\leq i\leq n\right) $ functions and obtain a characterization of slant
helices, that is, $V_{2}-$slant helix.(Theorem 1.2, pp 2, in \cite{ahmad}).

In this paper, we investigate the answer of the following question with the
similar method in Theorem 4.1 in \cite{scala} :

\emph{When the characterizations of inclined curves and slant helices in
Euclidean }$n-$\emph{space }$E^{n}$ \emph{which are similar to }$\left(
1.1\right) $\emph{\ have a necessary and sufficient case?}

\section{Preliminaries}

Let $\alpha :$ $I\subset 
\mathbb{R}
\longrightarrow E^{n}$ be an arbitrary curve in $E^{n}$. Recall that the
curve $\alpha $ is said a unit speed curve (or parameterized by arclength
functions) if $\left\langle \alpha ^{\prime }(s),\alpha ^{\prime
}(s)\right\rangle =1,$ where $\left\langle .,.\right\rangle $ denotes the
standart inner product of $%
\mathbb{R}
^{n}$ given by 
\begin{equation*}
\left\langle X,Y\right\rangle =\overset{n}{\underset{i=1}{\sum }}x_{i}y_{i}
\end{equation*}%
for each $X=(x_{1},x_{2,...,}x_{n}),Y=(y_{1},y_{2,...,}y_{n})\in 
\mathbb{R}
^{n}.$ In particular, the norm of a vector $X\in 
\mathbb{R}
^{n}$ is given by $\left\Vert X\right\Vert ^{2}=\left\langle
X,X\right\rangle $. Let $\left\{ V_{1},V_{2},...,V_{n}\right\} $ be the
moving Frenet frame along the unit speed curve $\alpha ,$ where $V_{i}$ $%
(i=1,2,...,n)$ denotes $i$th Frenet vector field. Then the Frenet formulas
are given by%
\begin{equation*}
\left[ 
\begin{array}{c}
V_{1}^{^{\prime }} \\ 
V_{2}^{^{\prime }} \\ 
V_{3}^{^{\prime }} \\ 
\vdots  \\ 
V_{n-2}^{^{\prime }} \\ 
V_{n-1}^{^{\prime }} \\ 
V_{n}^{^{\prime }}%
\end{array}%
\right] =\left[ 
\begin{array}{cccccccc}
0 & k_{1} & 0 & 0 & \ldots  & 0 & 0 & 0 \\ 
-k_{1} & 0 & k_{2} & 0 & \ldots  & 0 & 0 & 0 \\ 
0 & -k_{2} & 0 & 0 & \ldots  & 0 & 0 & 0 \\ 
\vdots  & \vdots  & \vdots  & \vdots  & \vdots  & \vdots  & \vdots  & \vdots 
\\ 
0 & 0 & 0 & 0 & ... & 0 & k_{n-2} & 0 \\ 
0 & 0 & 0 & 0 & ... & -k_{n-2} & 0 & k_{n-1} \\ 
0 & 0 & 0 & 0 & ... & 0 & -k_{n-1} & 0%
\end{array}%
\right] \left[ 
\begin{array}{c}
V_{1} \\ 
V_{2} \\ 
V_{3} \\ 
\vdots  \\ 
V_{n-2} \\ 
V_{n-1} \\ 
V_{n}%
\end{array}%
\right] 
\end{equation*}%
where $k_{i}(i=1,2,...,n-1)$ denotes the $i$th curvature function of the
curve \cite{haci1}$.$ If all of the curvatures $k_{i}$ $(i=1,2,...,n-1)$ of
the curve nowhere vanish in $I\subset 
\mathbb{R}
,$ the curve is called non-degenerate curve.

\begin{definition}
Let $\alpha :I\subset 
\mathbb{R}
\rightarrow E^{n}$ be a curve in $E^{n}$ with arc-length parameter $s$ and
let $X$ be a unit constant vector of $E^{n}$. For all $s\in I$, if%
\begin{equation*}
\left\langle V_{1},X\right\rangle =\cos (\varphi ),\varphi \neq \frac{\pi }{2%
},\varphi =\text{constant,}
\end{equation*}%
then the curve $\alpha $ is called a general helix or inclined curve ($V_{1}$%
-slant helix) in $E^{n}$, where $V_{1}$ is the unit tangent vector of $%
\alpha $ at its point $\alpha (s)$ and $\varphi $ is a constant angle
between the vector fields $V_{1}$ and $X$ \cite{haci}.
\end{definition}

\begin{definition}
Let $\alpha :I\subset 
\mathbb{R}
\rightarrow E^{n}$ be a curve in $E^{n}$ with arc-length parameter $s$ and
let $X$ be a unit constant vector of $E^{n}$. For all $s\in I$, if%
\begin{equation*}
\left\langle V_{2},X\right\rangle =\cos (\varphi ),\varphi \neq \frac{\pi }{2%
},\varphi =\text{constant,}
\end{equation*}%
then the curve $\alpha $ is called a slant helix or $V_{2}$-slant helix in $%
E^{n}$, where $V_{2}$ is the 2 th vector field of $\alpha $ and $\varphi $
is a constant angle between the vector fields $V_{2}$ and $X$ \cite{ahmad}.
\end{definition}

\begin{definition}
Let $\alpha :I\subset 
\mathbb{R}
\rightarrow E^{n}$ be a unit speed curve with nonzero curvatures $k_{i}$ ($%
1\leq i\leq n-1$) in $E^{n}$ and let $\left\{ V_{1},V_{2},...,V_{n}\right\} $
denote the Frenet frame of the curve $\alpha $. We call that $\alpha $ is a $%
V_{n}$-slant helix if the n th unit vector field $V_{n}$ makes a constant
angle $\varphi $ with a fixed direction $X$, that is, 
\begin{equation*}
\left\langle V_{n},X\right\rangle =\cos (\varphi ),\varphi \neq \frac{\pi }{2%
},\varphi =\text{constant,}
\end{equation*}%
along the curve $\alpha $, where $X$ is unit vector field in $E^{n}$ \cite%
{gok}.
\end{definition}

\section{Inclined curves and their harmonic curvature functions}

In this section, we reconfigure some known characterizations by using
harmonic curvatures for inclined curves.

\begin{definition}
\textbf{\ }Let $\alpha $ be a unit curve in $E^{n}$. The harmonic curvatures
of $\alpha $ are defined by $H_{i}:I\rightarrow 
\mathbb{R}
$, $i=0,1,...,n-2$, such that%
\begin{equation*}
H_{0}=0,H_{1}=\frac{k_{1}}{k_{2}},H_{i}=\left\{ H_{i-1}^{%
{\acute{}}%
}+k_{i}H_{i-2}\right\} \frac{1}{k_{i+1}}
\end{equation*}%
for $2\leq i\leq n-2$, where $k_{i}\neq 0$ for $i=1,2,...,n-1$\cite{haci} .
\end{definition}

\begin{lemma}
Let $\alpha $ be a unit curve in $E^{n}$ and let $H_{n-2}\neq 0$ be for $%
i=n-2$. Then, $H_{1}^{2}+H_{2}^{2}+...+H_{n-2}^{2}$ is a nonzero constant if
and only if $H_{n-2}^{%
{\acute{}}%
}=-k_{n-1}H_{n-3}$.
\end{lemma}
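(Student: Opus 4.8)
The plan is to compute the derivative of $f := H_{1}^{2}+H_{2}^{2}+\cdots +H_{n-2}^{2}$ directly and to show that it telescopes. First I would rewrite the defining recursion $H_{i}=\{H_{i-1}^{\prime }+k_{i}H_{i-2}\}\frac{1}{k_{i+1}}$ in the differentiated form
\[
H_{j}^{\prime }=k_{j+2}H_{j+1}-k_{j+1}H_{j-1},\qquad 1\leq j\leq n-3,
\]
where the case $j=1$ is legitimately included because $H_{0}=0$ (so this reads $H_{1}^{\prime }=k_{3}H_{2}$, which is exactly the $i=2$ instance of the definition). Note that $H_{n-2}^{\prime }$ is \emph{not} constrained by the definition; it is the genuinely free quantity, and the identity to be proved is precisely an equation for it.

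Next I would differentiate to get $f^{\prime }=2\sum_{j=1}^{n-2}H_{j}H_{j}^{\prime }$ and substitute the displayed formula for every $H_{j}^{\prime }$ with $j\leq n-3$, leaving the term $H_{n-2}H_{n-2}^{\prime }$ untouched. This turns $\tfrac{1}{2}f^{\prime }$ into $H_{n-2}H_{n-2}^{\prime }$ plus the sum $\sum_{j=1}^{n-3}\left( k_{j+2}H_{j}H_{j+1}-k_{j+1}H_{j-1}H_{j}\right) $. The key observation is that, after the reindexing $j\mapsto j+1$ in the second half, this is a telescoping sum whose interior terms cancel in pairs; the $j=1$ term of the negative part vanishes because $H_{0}=0$, and one is left with only the single boundary term $k_{n-1}H_{n-3}H_{n-2}$. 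Hence
\[
f^{\prime }=2H_{n-2}\left( H_{n-2}^{\prime }+k_{n-1}H_{n-3}\right) .
\]

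From this the equivalence is immediate. Since $f\geq H_{n-2}^{2}>0$ everywhere by hypothesis, $f$ is a nonzero constant if and only if $f^{\prime }\equiv 0$; and because $H_{n-2}$ never vanishes, $f^{\prime }\equiv 0$ holds if and only if $H_{n-2}^{\prime }=-k_{n-1}H_{n-3}$. The only real care needed is in the bookkeeping of the telescoping sum — getting the ranges of summation and the index shift exactly right — together with a quick check of the low-dimensional edge cases (for $n=3$ the sum is empty and the statement collapses to the trivial ``$H_{1}^{2}$ constant $\iff H_{1}^{\prime }=0$'', and $n=4$ is the first genuine instance). I do not anticipate any substantive obstacle beyond this routine computation.
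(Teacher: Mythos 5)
Your proposal is correct and takes essentially the same route as the paper: both rewrite the recursion in the differentiated form $H_{j}^{\prime }=k_{j+2}H_{j+1}-k_{j+1}H_{j-1}$ (with $H_{1}^{\prime }=k_{3}H_{2}$ as the base case) and exploit the telescoping of $\sum H_{j}H_{j}^{\prime }$. The only difference is organizational: you package both directions into the single identity $f^{\prime }=2H_{n-2}\left( H_{n-2}^{\prime }+k_{n-1}H_{n-3}\right) $, where the paper writes out the forward and converse computations separately, and your observation that $f\geq H_{n-2}^{2}>0$ justifies the ``nonzero'' clause a bit more explicitly than the paper does.
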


\begin{proof}
First, we assume that $H_{1}^{2}+H_{2}^{2}+...+H_{n-2}^{2}$ is a nonzero
constant . Consider the functions%
\begin{equation*}
H_{i}=\left\{ H_{i-1}^{%
{\acute{}}%
}+k_{i}H_{i-2}\right\} \frac{1}{k_{i+1}}
\end{equation*}%
for $3\leq i\leq n-2$. So, from the equality, we can write%
\begin{equation}
k_{i+1}H_{i}=H_{i-1}^{%
{\acute{}}%
}+k_{i}H_{i-2}\text{, }3\leq i\leq n-2\text{.}
\end{equation}%
Hence, in (3.1), if we take $i+1$ instead of $i$, we get%
\begin{equation}
H_{i}^{%
{\acute{}}%
}=k_{i+2}H_{i+1}-k_{i+1}H_{i-1}\text{, }2\leq i\leq n-3
\end{equation}%
together with%
\begin{equation}
H_{1}^{%
{\acute{}}%
}=k_{3}H_{2}\text{.}
\end{equation}%
On the other hand, since $H_{1}^{2}+H_{2}^{2}+...+H_{n-2}^{2}$ is constant,
we have%
\begin{equation*}
H_{1}H_{1}^{%
{\acute{}}%
}+H_{2}H_{2}%
{\acute{}}%
+...+H_{n-2}H_{n-2}^{%
{\acute{}}%
}=0
\end{equation*}%
and so,%
\begin{equation}
H_{n-2}H_{n-2}^{%
{\acute{}}%
}=-H_{1}H_{1}^{%
{\acute{}}%
}-H_{2}H_{2}^{%
{\acute{}}%
}-...-H_{n-3}H_{n-3}^{%
{\acute{}}%
}\text{.}
\end{equation}%
By using (3.2) and (3.3), we obtain%
\begin{equation}
H_{1}H_{1}^{%
{\acute{}}%
}=k_{3}H_{1}H_{2}
\end{equation}%
and%
\begin{equation}
H_{i}H_{i}^{%
{\acute{}}%
}=k_{i+2}H_{i}H_{i+1}-k_{i+1}H_{i-1}H_{i}\text{, }2\leq i\leq n-3\text{.}
\end{equation}%
Therefore, by using (3.4), (3.5) and (3.6), a algebraic calculus shows that%
\begin{equation*}
H_{n-2}H_{n-2}^{%
{\acute{}}%
}=-k_{n-1}H_{n-3}H_{n-2}\text{.}
\end{equation*}%
Since $H_{n-2}\neq 0$, we get the relation $H_{n-2}^{%
{\acute{}}%
}=-k_{n-1}H_{n-3}$.

Conversely, we assume that%
\begin{equation}
H_{n-2}^{%
{\acute{}}%
}=-k_{n-1}H_{n-3}
\end{equation}%
By using (3.7) and $H_{n-2}\neq 0$, we can write%
\begin{equation}
H_{n-2}H_{n-2}^{%
{\acute{}}%
}=-k_{n-1}H_{n-2}H_{n-3}
\end{equation}%
From (3.6), we have 
\begin{eqnarray*}
\text{for }i &=&n-3\text{, \ \ \ \ \ }H_{n-3}H_{n-3}^{%
{\acute{}}%
}=k_{n-1}H_{n-3}H_{n-2}-k_{n-2}H_{n-4}H_{n-3} \\
\text{for }i &=&n-4\text{, \ \ \ \ \ }H_{n-4}H_{n-4}^{%
{\acute{}}%
}=k_{n-2}H_{n-4}H_{n-3}-k_{n-3}H_{n-5}H_{n-4} \\
\text{for }i &=&n-5\text{, \ \ \ \ \ }H_{n-5}H_{n-5}^{%
{\acute{}}%
}=k_{n-3}H_{n-5}H_{n-4}-k_{n-4}H_{n-6}H_{n-5} \\
&&\cdot  \\
&&\cdot  \\
&&\cdot  \\
\text{for }i &=&2\text{, \ \ \ \ \ \ \ \ \ \ }H_{2}H_{2}^{%
{\acute{}}%
}=k_{4}H_{2}H_{3}-k_{3}H_{1}H_{2}
\end{eqnarray*}%
and from (3.5), we have%
\begin{equation*}
H_{1}H_{1}^{%
{\acute{}}%
}=k_{3}H_{1}H_{2}\text{.}
\end{equation*}%
So, an algebraic calculus show that%
\begin{equation}
H_{1}H_{1}^{%
{\acute{}}%
}+H_{2}H_{2}^{%
{\acute{}}%
}+...+\text{\ }H_{n-5}H_{n-5}^{%
{\acute{}}%
}+H_{n-4}H_{n-4}^{%
{\acute{}}%
}+H_{n-3}H_{n-3}^{%
{\acute{}}%
}+H_{n-2}H_{n-2}^{%
{\acute{}}%
}=0\text{.}
\end{equation}%
And, by integrating (3.9), we can easily say that%
\begin{equation*}
H_{1}^{2}+H_{2}^{2}+...+H_{n-2}^{2}
\end{equation*}%
is a non-zero constant. This completes the proof.
\end{proof}

\begin{theorem}
Let $\alpha $ be an inclined curve and let $X$ be a axis of $\alpha $ . Then,%
\begin{equation*}
\left\langle V_{i+2},X\right\rangle =H_{i}\left\langle V_{1},X\right\rangle 
\text{, }1\leq i\leq n-2\text{,}
\end{equation*}%
where $\left\{ V_{1},V_{2},...,V_{n}\right\} $ denote the Frenet frame of
the curve $\alpha $ and $\left\{ H_{1},H_{2},...,H_{n-2}\right\} $ denote
the harmonic curvature functions of $\alpha $ \cite{haci} or \cite{haci1}.
\end{theorem}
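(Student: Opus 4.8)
The plan is to prove the identity by induction on $i$, at each stage differentiating the relation already obtained and inserting the result into the next stage via the Frenet equations and the recursive definition of the harmonic curvatures $H_i$.

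First I would isolate the two elementary facts on which everything rests. Because $\alpha$ is an inclined curve with axis $X$, the function $\langle V_1,X\rangle=\cos\varphi$ is constant; differentiating and using $V_1'=k_1V_2$ with $k_1\neq 0$ gives $\langle V_2,X\rangle=0$. Since $H_0=0$, this is precisely the asserted identity for $i=0$, which I keep as an auxiliary base case. For $i=1$ I would differentiate $\langle V_2,X\rangle=0$ and use $V_2'=-k_1V_1+k_2V_3$, obtaining $-k_1\langle V_1,X\rangle+k_2\langle V_3,X\rangle=0$; dividing by $k_2\neq 0$ yields $\langle V_3,X\rangle=(k_1/k_2)\langle V_1,X\rangle=H_1\langle V_1,X\rangle$.

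For the inductive step, assume the identity holds for the indices $i-1$ and $i$, where $1\le i\le n-3$, and differentiate $\langle V_{i+2},X\rangle=H_i\langle V_1,X\rangle$. On the left, the interior Frenet relation $V_{i+2}'=-k_{i+1}V_{i+1}+k_{i+2}V_{i+3}$ (valid since $i+2\le n-1$) gives $-k_{i+1}\langle V_{i+1},X\rangle+k_{i+2}\langle V_{i+3},X\rangle$. On the right, $H_i'\langle V_1,X\rangle+H_ik_1\langle V_2,X\rangle=H_i'\langle V_1,X\rangle$, using $\langle V_2,X\rangle=0$. Now substitute $\langle V_{i+1},X\rangle=\langle V_{(i-1)+2},X\rangle=H_{i-1}\langle V_1,X\rangle$ from the induction hypothesis and solve for $\langle V_{i+3},X\rangle$; this gives $\langle V_{i+3},X\rangle=\frac{1}{k_{i+2}}\bigl(H_i'+k_{i+1}H_{i-1}\bigr)\langle V_1,X\rangle$, which equals $H_{i+1}\langle V_1,X\rangle$ by the defining recursion $H_{i+1}=\{H_i'+k_{i+1}H_{i-1}\}/k_{i+2}$. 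Running $i$ from $1$ to $n-3$ then delivers the identity for all $i$ from $2$ up to $n-2$, completing the induction.

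Since the computation is essentially bookkeeping, the only real care is needed with the index ranges: one must check that the Frenet row invoked at each stage is an interior one (so that the $k_{i+2}V_{i+3}$ term is genuinely present), that the recursion defining $H_{i+1}$ applies exactly on the range $1\le i\le n-3$, and that the degenerate data $H_0=0$ and $\langle V_2,X\rangle=0$ correctly seed the induction. The non-vanishing of every curvature $k_j$—implicit in the very definition of the harmonic curvatures—is what makes each division legitimate. I do not expect any obstacle beyond arranging these indices consistently.
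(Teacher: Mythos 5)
Your induction is correct and complete: the base cases $\langle V_2,X\rangle=0$ and $\langle V_3,X\rangle=H_1\langle V_1,X\rangle$, the use of the interior Frenet row for $i+2\le n-1$, and the identification of $\frac{1}{k_{i+2}}\left(H_i'+k_{i+1}H_{i-1}\right)$ with $H_{i+1}$ via the defining recursion are all handled with the right index ranges. The paper states this theorem without proof, citing \cite{haci} and \cite{haci1}; your argument is precisely the standard inductive derivation behind that citation, so there is nothing to add.
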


\begin{theorem}
Let $\left\{ V_{1},V_{2},...,V_{n}\right\} $ be the Frenet frame of the
curve $\alpha $ and let $\left\{ H_{1},H_{2},...,H_{n-2}\right\} $ be the
harmonic curvature functions of $\alpha $. Then, $\alpha $ is an inclined
curve (with the curvatures $k_{i}\neq 0$, $i=1,2,...,n-1$) in $E^{n}$ if and
only if $\sum\limits_{i=1}^{n-2}H_{i}^{2}=$constant and $H_{n-2}\neq 0$.
\end{theorem}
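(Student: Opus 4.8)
The plan is to prove the two implications separately. Necessity is short once one invokes Theorem~3.1, which already writes every higher Frenet component of the axis in terms of the $H_i$; sufficiency is the substantive half, where I would construct a candidate axis explicitly out of the Frenet vectors and the harmonic curvatures and then check that it is constant, the identity supplied by Lemma~3.1 being exactly what kills the remaining term.

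\emph{Necessity.} Suppose $\alpha$ is inclined with unit axis $X$, so $\langle V_1,X\rangle=\cos\varphi$ is constant and $\varphi\neq\pi/2$. Differentiating and using $V_1'=k_1V_2$ gives $k_1\langle V_2,X\rangle=0$, hence $\langle V_2,X\rangle=0$ since $k_1\neq0$. By Theorem~3.1, $\langle V_{i+2},X\rangle=H_i\cos\varphi$ for $1\le i\le n-2$, so expanding $X$ in the orthonormal Frenet frame and using $\|X\|^{2}=1$ yields
\[
1=\cos^{2}\varphi+0+\cos^{2}\varphi\sum_{i=1}^{n-2}H_i^{2},
\]
that is, $\sum_{i=1}^{n-2}H_i^{2}=\tan^{2}\varphi$ is constant; it is nonzero, since $\varphi=0$ would force $V_1\equiv X$ constant and hence $k_1=0$. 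Finally $H_{n-2}$ does not vanish identically: otherwise $\langle V_n,X\rangle\equiv0$ by Theorem~3.1, so $0=\langle V_n',X\rangle=-k_{n-1}\langle V_{n-1},X\rangle=-k_{n-1}H_{n-3}\cos\varphi$, forcing $H_{n-3}\equiv0$, and descending through the harmonic-curvature relations $H_i'=k_{i+2}H_{i+1}-k_{i+1}H_{i-1}$ (with $H_0=0$) would force $H_1=k_1/k_2\equiv0$, a contradiction.

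\emph{Sufficiency.} Put $c:=\sum_{i=1}^{n-2}H_i^{2}$, which is constant, and nonzero because $H_1=k_1/k_2\neq0$. Choose $\varphi\in(0,\pi/2)$ with $\cos^{2}\varphi=1/(1+c)$ and define
\[
X:=\cos\varphi\Bigl(V_1+\sum_{i=1}^{n-2}H_iV_{i+2}\Bigr),
\]
so that $\|X\|^{2}=\cos^{2}\varphi\,(1+c)=1$ by orthonormality. I would then differentiate $X$ with the Frenet equations and collect, for each $j$, the coefficient of $V_j$ in $X'$: the coefficient of $V_1$ is $0$; the coefficient of $V_2$ is $k_1-k_2H_1$, which is $0$ by the definition of $H_1$; for $3\le j\le n-1$ the coefficient is the left side of a harmonic-curvature identity $H_i'=k_{i+2}H_{i+1}-k_{i+1}H_{i-1}$ (using $H_0=0$ at the lower end), hence $0$; and the coefficient of $V_n$ is $H_{n-2}'+k_{n-1}H_{n-3}$, which vanishes precisely because Lemma~3.1 — applicable since $c$ is a nonzero constant and $H_{n-2}\neq0$ — gives $H_{n-2}'=-k_{n-1}H_{n-3}$. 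Thus $X'=0$, so $X$ is a unit constant vector with $\langle V_1,X\rangle=\cos\varphi$ constant and nonzero, i.e.\ $\alpha$ is an inclined curve.

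The one genuinely essential ingredient on the sufficiency side is Lemma~3.1: its conclusion is exactly the identity that makes the top ($V_n$) component of $X'$ vanish, and this is where the hypothesis $H_{n-2}\neq0$ is consumed — without it the converse breaks down. The only place I expect to need care is the index bookkeeping when differentiating $X$, in particular matching the ranges of the harmonic-curvature relations at the two ends $j=3$ and $j=n-1$ (and noting that for $n=3$ everything collapses to the classical Lancret condition).
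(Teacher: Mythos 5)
Your proposal is correct and follows essentially the same route as the paper: necessity via Theorem~3.1 and the unit-norm expansion of the axis in the Frenet frame plus the descending contradiction for $H_{n-2}\neq 0$, and sufficiency by constructing $X=\cos\varphi\,(V_1+\sum H_iV_{i+2})$ and killing the $V_n$-coefficient of $X'$ with Lemma~3.1. The coefficient bookkeeping you outline checks out, so there is nothing to add.
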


\begin{proof}
Let $\alpha $ be a inclined curve. According to the Definition 2.1,%
\begin{equation}
\left\langle V_{1},X\right\rangle =\cos (\varphi )=\text{constant,}
\end{equation}%
where $X$ the axis of $\alpha $. And, from Theorem (3.1),%
\begin{equation}
\left\langle V_{i+2},X\right\rangle =H_{i}\left\langle V_{1},X\right\rangle
\end{equation}%
for $1\leq i\leq n-2$. Moreover, from (3.10) and Frenet equations, we can
write $\left\langle V_{2},X\right\rangle =0$.Since the orthonormal system$%
\left\{ V_{1},V_{2},...,V_{n}\right\} $ is a basis of $\varkappa (E^{n})$
(tangent bundle), $X$ can be expressed in the form%
\begin{equation}
X=\sum\limits_{i=1}^{n}\left\langle V_{i},X\right\rangle V_{i}\text{.}
\end{equation}%
Hence, by using the equations (3.10), (3.11) and (3.12), we obtain%
\begin{equation*}
X=\cos (\varphi )V_{1}+\dsum\limits_{i=1}^{n-2}H_{i}\cos (\varphi )V_{i+2}%
\text{.}
\end{equation*}%
Since $X$ is a unit vector field (see Definition 2.1),%
\begin{equation*}
\cos ^{2}(\varphi )+\dsum\limits_{i=1}^{n-2}H_{i}^{2}\cos ^{2}(\varphi )=1
\end{equation*}%
and so%
\begin{equation*}
\sum\limits_{i=1}^{n-2}H_{i}^{2}=\tan ^{2}(\varphi )=\text{constant.}
\end{equation*}%
Now, we are going to show that $H_{n-2}\neq 0$. We assume that $H_{n-2}=0$.
Then, for $i=n-2$ in Theorem 3.1,%
\begin{equation*}
\left\langle V_{n},X\right\rangle =H_{n-2}\left\langle V_{1},X\right\rangle
=0\text{.}
\end{equation*}%
So, $\left\langle D_{T}V_{n},X\right\rangle =\left\langle
-k_{n-1}V_{n-1},X\right\rangle =0$. We deduce that $\left\langle
V_{n-1},X\right\rangle =0$. On the other hand, for $i=n-3$ in Theorem 3.1,%
\begin{equation*}
\left\langle V_{n-1},X\right\rangle =H_{n-3}\left\langle
V_{1},X\right\rangle \text{.}
\end{equation*}%
And, since $\left\langle V_{n-1},X\right\rangle =0$, $H_{n-3}=0$. Continuing
this process, we get that $H_{1}=0$. Let us recall that $H_{1}=k_{1}/k_{2}$,
thus we have a contradiction because all the curvatures are nowhere zero.
Consequently $H_{n-2}\neq 0$.

Conversely, we assume that $\sum\limits_{i=1}^{n-2}H_{i}^{2}=\tan
^{2}(\varphi )=$constant and $H_{n-2}\neq 0$. Then, consider the vector field%
\begin{equation*}
X=\cos (\varphi )V_{1}+\sum\limits_{i=3}^{n}H_{i-2}\cos (\varphi )V_{i}\text{%
.}
\end{equation*}%
We want to verify that $X$ is a constant along $\alpha $, i.e. $D_{V_{1}}X=0$%
. So,%
\begin{eqnarray*}
D_{V_{1}}X &=&D_{V_{1}}\left( \cos (\varphi )V_{1}\right)
+\sum\limits_{i=3}^{n}D_{V_{1}}\left( H_{i-2}\cos (\varphi )V_{i}\right) \\
&=&\cos (\varphi )D_{V_{1}}V_{1}+\sum\limits_{i=3}^{n}\left( H_{i-2}^{%
{\acute{}}%
}\cos (\varphi )V_{i}+H_{i-2}\cos (\varphi )D_{V_{1}}V_{i}\right) \\
&=&\cos (\varphi )\left( k_{1}V_{2}+\sum\limits_{i=3}^{n-1}\left( H_{i-2}^{%
{\acute{}}%
}V_{i}-k_{i-1}H_{i-2}V_{i-1}+k_{i}H_{i-2}V_{i+1}\right) +H_{n-2}^{%
{\acute{}}%
}V_{n}-k_{n-1}H_{n-2}V_{n-1}\right) \text{.}
\end{eqnarray*}%
On the other hand, by using (3.2), we can write%
\begin{equation}
H_{i-2}^{%
{\acute{}}%
}=k_{i}H_{i-1}-k_{i-1}H_{i-3}
\end{equation}%
for $4\leq i\leq n-1$ together with (3.3). Moreover, from Lemma 3.1, we know
that%
\begin{equation}
H_{n-2}^{%
{\acute{}}%
}=-k_{n-1}H_{n-3}
\end{equation}%
Therefore, by using (3.3), (3.13) and (3.14) , an algebraic calculus shows
that $D_{V_{1}}X=0$. Since%
\begin{eqnarray*}
\left\Vert X\right\Vert &=&\cos ^{2}(\varphi
)+\sum\limits_{i=3}^{n}H_{i-2}^{2}\cos ^{2}(\varphi ) \\
&=&\cos ^{2}(\varphi )\left( 1+\sum\limits_{i=1}^{n-2}H_{i}^{2}\right) \\
&=&\cos ^{2}(\varphi )\left( 1+\tan ^{2}(\varphi )\right) \\
&=&1\text{ ,}
\end{eqnarray*}%
$X$ is a unit vector field. Furthermore, $\left\langle V_{1},X\right\rangle
=\cos (\varphi )=$constant. Hence, we deduce that $\alpha $ is an inclined
curve.
\end{proof}

\noindent \textbf{Remark 3.1. }The following corollary is the
reconfiguration of the Theorem 3.4 in \cite{cam}.

\begin{corollary}
Let $\left\{ V_{1},V_{2},...,V_{n}\right\} $ be the Frenet frame of the
curve $\alpha $ and let $\left\{ H_{1},H_{2},...,H_{n-2}\right\} $ be the
harmonic curvature functions of $\alpha $. Then, $\alpha $ is an inclined
curve (with the curvatures $k_{i}\neq 0$, $i=1,2,...,n-1$) in $E^{n}$ if and
only if $H_{n-2}^{%
{\acute{}}%
}=-k_{n-1}H_{n-3}$ and $H_{n-2}\neq 0$.
\end{corollary}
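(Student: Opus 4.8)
The plan is to obtain this corollary by simply chaining together the two results already established, namely Theorem 3.2 and Lemma 3.1, since between them they contain all the analytic content. Theorem 3.2 characterizes an inclined curve (with all $k_i \neq 0$) by the pair of conditions $\sum_{i=1}^{n-2} H_i^2 = \text{constant}$ and $H_{n-2} \neq 0$, while Lemma 3.1 shows that, under the standing hypothesis $H_{n-2} \neq 0$, the constancy of $\sum_{i=1}^{n-2} H_i^2$ (as a \emph{nonzero} constant) is equivalent to the differential relation $H_{n-2}^{\prime} = -k_{n-1} H_{n-3}$. So the statement should drop out by transitivity of the two equivalences, and no fresh computation is needed.

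For the necessity direction I would argue as follows. Assume $\alpha$ is an inclined curve with nowhere-vanishing curvatures. By Theorem 3.2 we immediately get $H_{n-2} \neq 0$ and $\sum_{i=1}^{n-2} H_i^2 = \tan^2(\varphi) = \text{constant}$. At this point one should pause to note that this constant is actually \emph{nonzero}: if $\varphi = 0$ then $\langle V_1, X\rangle = 1$ forces $V_1 = X$, a constant vector, so $V_1^{\prime} = k_1 V_2 = 0$ and hence $k_1 = 0$, contradicting non-degeneracy; thus $\tan^2(\varphi) \neq 0$. Now with $H_{n-2} \neq 0$ and $\sum_{i=1}^{n-2} H_i^2$ a nonzero constant, Lemma 3.1 gives $H_{n-2}^{\prime} = -k_{n-1} H_{n-3}$, and together with $H_{n-2} \neq 0$ this is exactly the desired conclusion.

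For the sufficiency direction I would assume $H_{n-2}^{\prime} = -k_{n-1} H_{n-3}$ and $H_{n-2} \neq 0$. The converse half of Lemma 3.1 then gives that $\sum_{i=1}^{n-2} H_i^2$ is a nonzero constant; in particular it is constant and $H_{n-2} \neq 0$, so the hypotheses of the converse half of Theorem 3.2 are satisfied, and we conclude that $\alpha$ is an inclined curve.

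I do not expect any genuine obstacle here, as the corollary is essentially bookkeeping on top of Theorem 3.2 and Lemma 3.1. The only spot deserving a line of justification is the passage, in the necessity direction, from "$\sum_{i=1}^{n-2} H_i^2$ is constant" to "$\sum_{i=1}^{n-2} H_i^2$ is a \emph{nonzero} constant," which is precisely the hypothesis Lemma 3.1 needs; this is dispatched by the short observation above that $\varphi \neq 0$ whenever $\alpha$ is non-degenerate.
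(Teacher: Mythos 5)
Your proposal is correct and follows exactly the route the paper takes: Corollary 3.1 is obtained by chaining Lemma 3.1 with Theorem 3.2, which is all the paper's one-line proof does. Your extra observation that $\sum_{i=1}^{n-2}H_i^2=\tan^2(\varphi)$ is genuinely nonzero (since $\varphi=0$ would force $k_1=0$) is a small but worthwhile refinement that the paper leaves implicit, since Lemma 3.1 is stated for a \emph{nonzero} constant.
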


\begin{proof}
It is obvious by using Lemma (3.1) and Theorem (3.2).
\end{proof}

\section{$V_{n}$-slant helices and their harmonic curvature functions}

In this section, we reconfigure some known characterizations by using
harmonic curvatures for $V_{n}$-slant helices.

\begin{definition}
Let $\alpha :I\subset 
\mathbb{R}
\rightarrow E^{n}$ be a unit speed curve with nonzero curvatures $k_{i}$ ($%
i=1,2,...,n-1$) in $E^{n}$. Harmonic curvature functions of $\alpha $ are
defined by $H_{i}^{\ast }:I\subset 
\mathbb{R}
\rightarrow 
\mathbb{R}
$,%
\begin{equation*}
H_{0}^{\ast }=0,H_{1}^{\ast }=\frac{k_{n-1}}{k_{n-2}},H_{i}^{\ast }=\left\{
k_{n-i}H_{i-2}^{\ast }-H_{i-1}^{\ast \prime }\right\} \frac{1}{k_{n-(i+1)}}
\end{equation*}%
for $2\leq i\leq n-2$ \cite{gok}.
\end{definition}

\begin{lemma}
Let $\alpha $ be a unit curve in $E^{n}$ and let $H_{n-2}^{\ast }\neq 0$ be
for $i=n-2$ . Then, $H_{1}^{\ast 2}+H_{2}^{\ast 2}+...+H_{n-2}^{\ast 2}$ is
a nonzero constant if and only if $H_{n-2}^{\ast \prime }=k_{1}H_{n-3}^{\ast
}$.
\end{lemma}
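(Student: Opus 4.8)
The plan is to reproduce, line for line, the argument of Lemma 3.1, with the harmonic curvatures $H_i$ replaced by their reversed analogues $H_i^{\ast}$ and with the curvature indices reflected. First I would clear the denominator in the defining recursion to get
\[
k_{n-(i+1)}H_i^{\ast}=k_{n-i}H_{i-2}^{\ast}-H_{i-1}^{\ast\prime},\qquad 2\le i\le n-2,
\]
and then solve for the derivative, shifting the index, to obtain the pair of identities
\[
H_i^{\ast\prime}=k_{n-i-1}H_{i-1}^{\ast}-k_{n-i-2}H_{i+1}^{\ast}\ \ (1\le i\le n-3),\qquad H_1^{\ast\prime}=-k_{n-3}H_2^{\ast},
\]
the second being the $i=1$ instance of the first together with $H_0^{\ast}=0$. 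These are the exact counterparts of (3.2) and (3.3).

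Next I would multiply each identity by $H_i^{\ast}$ and sum over $i=1,\dots,n-3$. The right-hand side telescopes: the positive term $k_{n-3}H_1^{\ast}H_2^{\ast}$ arising at $i=2$ cancels the base term coming from the $i=1$ equation, the negative term $-k_{n-i-2}H_i^{\ast}H_{i+1}^{\ast}$ at index $i$ is killed by the positive term at index $i+1$, and the only survivor is the last negative term, so
\[
\sum_{i=1}^{n-3}H_i^{\ast}H_i^{\ast\prime}=-k_1H_{n-3}^{\ast}H_{n-2}^{\ast}.
\]
This "algebraic calculus" is the single place I expect to be error-prone: because the curvatures enter as $k_{n-i}$, the bookkeeping of signs and off-by-ones is delicate, so I would verify the telescoping by hand in the low-dimensional cases $n=4,5$ before trusting it in general.

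Given this identity both implications are immediate. For necessity, if $\sum_{i=1}^{n-2}H_i^{\ast 2}$ is a nonzero constant then its derivative vanishes, i.e. $\sum_{i=1}^{n-2}H_i^{\ast}H_i^{\ast\prime}=0$; subtracting the telescoped sum gives $H_{n-2}^{\ast}H_{n-2}^{\ast\prime}=k_1H_{n-3}^{\ast}H_{n-2}^{\ast}$, and since $H_{n-2}^{\ast}\neq 0$ we cancel it to obtain $H_{n-2}^{\ast\prime}=k_1H_{n-3}^{\ast}$. For sufficiency, assuming $H_{n-2}^{\ast\prime}=k_1H_{n-3}^{\ast}$, multiply by $H_{n-2}^{\ast}$ and add to the telescoped identity to get $\sum_{i=1}^{n-2}H_i^{\ast}H_i^{\ast\prime}=0$; integrating shows $\sum_{i=1}^{n-2}H_i^{\ast 2}$ is constant, and this constant is nonzero because the term $H_{n-2}^{\ast 2}$ is strictly positive at every point of $I$. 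No idea beyond Lemma 3.1 is required — only care with the reflected recursion.
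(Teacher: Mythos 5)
Your proposal is correct and follows essentially the same route as the paper: clear the denominator in the recursion, shift the index to get $H_i^{\ast\prime}=k_{n-(i+1)}H_{i-1}^{\ast}-k_{n-(i+2)}H_{i+1}^{\ast}$ together with $H_1^{\ast\prime}=-k_{n-3}H_2^{\ast}$, multiply by $H_i^{\ast}$, telescope to $\sum_{i=1}^{n-3}H_i^{\ast}H_i^{\ast\prime}=-k_1H_{n-3}^{\ast}H_{n-2}^{\ast}$, and read off both implications. Your closing remark that the constant is nonzero because $H_{n-2}^{\ast 2}>0$ is in fact slightly more explicit than the paper's own justification.
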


\begin{proof}
First, we assume that $H_{1}^{\ast 2}+H_{2}^{\ast 2}+...+H_{n-2}^{\ast 2}$
is a nonzero constant . Consider the functions%
\begin{equation*}
H_{i}^{\ast }=\left\{ k_{n-i}H_{i-2}^{\ast }-H_{i-1}^{\ast \prime }\right\} 
\frac{1}{k_{n-(i+1)}}
\end{equation*}%
for $3\leq i\leq n-2$. So, from the equality, we can write%
\begin{equation}
k_{n-(i+1)}H_{i}^{\ast }=k_{n-i}H_{i-2}^{\ast }-H_{i-1}^{\ast \prime }\text{%
, }3\leq i\leq n-2\text{.}
\end{equation}%
Hence, in (4.1), if we take $i+1$ instead of $i$, we get%
\begin{equation}
H_{i}^{\ast \prime }=k_{n-(i+1)}H_{i-1}^{\ast }-k_{n-(i+2)}H_{i+1}^{\ast
},2\leq i\leq n-3
\end{equation}%
together with%
\begin{equation}
H_{1}^{\ast \prime }=-k_{n-3}H_{2}^{\ast }\text{.}
\end{equation}%
On the other hand, since $H_{1}^{\ast 2}+H_{2}^{\ast 2}+...+H_{n-2}^{\ast 2}$
is constant, we have%
\begin{equation*}
H_{1}^{\ast }H_{1}^{\ast \prime }+H_{2}^{\ast }H_{2}^{\ast \prime
}+...+H_{n-2}^{\ast }H_{n-2}^{\ast \prime }=0
\end{equation*}%
and so, 
\begin{equation}
H_{n-2}^{\ast }H_{n-2}^{\ast \prime }=-H_{1}^{\ast }H_{1}^{\ast \prime
}-H_{2}^{\ast }H_{2}^{\ast \prime }-...-H_{n-3}^{\ast }H_{n-3}^{\ast \prime }%
\text{.}
\end{equation}%
By using (4.2) and (4.3), we obtain%
\begin{equation}
H_{1}^{\ast }H_{1}^{\ast \prime }=-k_{n-3}H_{1}^{\ast }H_{2}^{\ast }
\end{equation}%
and%
\begin{equation}
H_{i}^{\ast }H_{i}^{\ast \prime }=k_{n-(i+1)}H_{i-1}^{\ast }H_{i}^{\ast
}-k_{n-(i+2)}H_{i}^{\ast }H_{i+1}^{\ast }\text{, }2\leq i\leq n-3\text{.}
\end{equation}%
Therefore, by using (4.4), (4.5) and (4.6), a algebraic calculus shows that%
\begin{equation*}
H_{n-2}^{\ast }H_{n-2}^{\ast \prime }=k_{1}H_{n-3}^{\ast }H_{n-2}^{\ast }%
\text{.}
\end{equation*}%
Since $H_{n-2}^{\ast }\neq 0$, we get the relation $H_{n-2}^{\ast \prime
}=k_{1}H_{n-3}^{\ast }$.

Conversely, we assume that%
\begin{equation}
H_{n-2}^{\ast \prime }=k_{1}H_{n-3}^{\ast }.
\end{equation}%
By using (4.7) and $H_{n-2}^{\ast }\neq 0$, we can write%
\begin{equation}
H_{n-2}^{\ast }H_{n-2}^{\ast \prime }=k_{1}H_{n-2}^{\ast }H_{n-3}^{\ast }
\end{equation}%
From (4.6), we have 
\begin{eqnarray*}
\text{for }i &=&n-3\text{, \ \ \ \ \ }H_{n-3}^{\ast }H_{n-3}^{\ast \prime
}=k_{2}H_{n-4}^{\ast }H_{n-3}^{\ast }-k_{1}H_{n-3}^{\ast }H_{n-2}^{\ast } \\
\text{for }i &=&n-4\text{, \ \ \ \ \ }H_{n-4}^{\ast }H_{n-4}^{\ast \prime
}=k_{3}H_{n-5}^{\ast }H_{n-4}^{\ast }-k_{2}H_{n-4}^{\ast }H_{n-3}^{\ast } \\
\text{for }i &=&n-5\text{, \ \ \ \ \ }H_{n-5}^{\ast }H_{n-5}^{\ast \prime
}=k_{4}H_{n-6}^{\ast }H_{n-5}^{\ast }-k_{3}H_{n-5}^{\ast }H_{n-4}^{\ast } \\
&&\cdot  \\
&&\cdot  \\
&&\cdot  \\
\text{for }i &=&2\text{, \ \ \ \ \ \ \ \ \ \ }H_{2}^{\ast }H_{2}^{\ast
\prime }=k_{n-3}H_{1}^{\ast }H_{2}^{\ast }-k_{n-4}H_{2}^{\ast }H_{3}^{\ast }
\end{eqnarray*}%
and from (4.5), we have%
\begin{equation*}
H_{1}^{\ast }H_{1}^{\ast \prime }=-k_{n-3}H_{1}^{\ast }H_{2}^{\ast }\text{.}
\end{equation*}%
So, an algebraic calculus show that%
\begin{equation}
H_{1}^{\ast }H_{1}^{\ast \prime }+H_{2}^{\ast }H_{2}^{\ast \prime }+...+%
\text{\ }H_{n-5}^{\ast }H_{n-5}^{\ast \prime }+H_{n-4}^{\ast }H_{n-4}^{\ast
\prime }+H_{n-3}^{\ast }H_{n-3}^{\ast \prime }+H_{n-2}^{\ast }H_{n-2}^{\ast
\prime }=0\text{.}
\end{equation}%
And, by integrating (4.9), we can easily say that%
\begin{equation*}
H_{1}^{\ast 2}+H_{2}^{\ast 2}+...+H_{n-2}^{\ast 2}
\end{equation*}%
is a nonzero constant. This completes the proof.
\end{proof}

\noindent \textbf{Proposition 4.1. }Let $\alpha :I\subset 
\mathbb{R}
\rightarrow E^{n}$ be an arc-lengthed parameter curve in $E^{n}$ and $X$ a
unit constant vector field of $%
\mathbb{R}
^{n}$. $\left\{ V_{1},V_{2},...,V_{n}\right\} $ denote the Frenet frame of
the curve $\alpha $ and $\left\{ H_{1}^{\ast },H_{2}^{\ast
},...,H_{n-2}^{\ast }\right\} $ denote the harmonic curvature functions of
the curve $\alpha $. If $\alpha :I\subset 
\mathbb{R}
\rightarrow E^{n}$ is an $V_{n}$-slant helix with $X$ as its axis , then we
have for all $i=0,1,...,n-2$%
\begin{equation*}
\left\langle V_{n-(i+1)},X\right\rangle =H_{i}^{\ast }\left\langle
V_{n},X\right\rangle \text{.}
\end{equation*}%
\cite{gok}.

\noindent \textbf{Remark 4.1. }The following Theorem is the new version of
the Theorem 4 in \cite{gok} with addition sufficiency case.

\begin{theorem}
Let $\left\{ V_{1},V_{2},...,V_{n}\right\} $ be the Frenet frame of the
curve $\alpha $ and let $\left\{ H_{1}^{\ast },H_{2}^{\ast
},...,H_{n-2}^{\ast }\right\} $ be the harmonic curvature functions of $%
\alpha $. Then, $\alpha $ is an $V_{n}$ -slant helix (with the curvatures $%
k_{i}\neq 0$, $i=1,2,...,n-1$) in $E^{n}$ if and only if $%
\sum\limits_{i=1}^{n-2}H_{i}^{\ast 2}=$constant and $H_{n-2}^{\ast }\neq 0$.
\end{theorem}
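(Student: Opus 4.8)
The plan is to mirror, almost verbatim, the structure of the proof of Theorem 3.2, exploiting the fact that Proposition 4.1 plays the same role for $V_n$-slant helices that Theorem 3.1 plays for inclined curves, and that Lemma 4.1 is the $*$-analogue of Lemma 3.1. First, for the necessity direction I would start from the defining relation $\langle V_n,X\rangle=\cos\varphi=\text{constant}$ and, using the Frenet equations, deduce $\langle V_{n-1},X\rangle=0$ (differentiating $\langle V_n,X\rangle$ along the curve gives $\langle -k_{n-1}V_{n-1},X\rangle=0$). Then, since $\{V_1,\dots,V_n\}$ is an orthonormal frame, expand $X=\sum_{i=1}^n\langle V_i,X\rangle V_i$ and substitute the identity from Proposition 4.1, namely $\langle V_{n-(i+1)},X\rangle=H_i^{\ast}\langle V_n,X\rangle$ for $i=0,1,\dots,n-2$. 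Taking the squared norm of the resulting expression and using $\|X\|=1$ yields
\begin{equation*}
\cos^2\varphi+\sum_{i=1}^{n-2}H_i^{\ast 2}\cos^2\varphi=1,
\end{equation*}
hence $\sum_{i=1}^{n-2}H_i^{\ast 2}=\tan^2\varphi$, a constant.

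Still in the necessity direction, I would establish $H_{n-2}^{\ast}\neq 0$ by contradiction, again copying the argument in Theorem 3.2: if $H_{n-2}^{\ast}=0$, then Proposition 4.1 with $i=n-2$ gives $\langle V_1,X\rangle=H_{n-2}^{\ast}\langle V_n,X\rangle=0$; differentiating and using the Frenet equation $V_1'=k_1 V_2$ forces $\langle V_2,X\rangle=0$, and then Proposition 4.1 with $i=n-3$ gives $H_{n-3}^{\ast}\langle V_n,X\rangle=0$, so $H_{n-3}^{\ast}=0$. Iterating this down the frame shows $H_1^{\ast}=0$, i.e. $k_{n-1}/k_{n-2}=0$, contradicting the hypothesis that all curvatures are nowhere zero. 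So $H_{n-2}^{\ast}\neq 0$.

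For the sufficiency direction, assuming $\sum_{i=1}^{n-2}H_i^{\ast 2}=\tan^2\varphi=\text{constant}$ and $H_{n-2}^{\ast}\neq 0$, I would define the candidate axis
\begin{equation*}
X=\cos\varphi\, V_n+\sum_{i=1}^{n-2}H_i^{\ast}\cos\varphi\, V_{n-(i+1)}
\end{equation*}
and verify two things: that $D_{V_1}X=0$ (so $X$ is a constant vector field) and that $\|X\|=1$. The norm computation is immediate: $\|X\|^2=\cos^2\varphi(1+\sum_{i=1}^{n-2}H_i^{\ast 2})=\cos^2\varphi(1+\tan^2\varphi)=1$. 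For $D_{V_1}X=0$ I would differentiate term by term using the Frenet formulas, collect the coefficient of each $V_j$, and show each vanishes; the key inputs are the recursion $H_i^{\ast\prime}=k_{n-(i+1)}H_{i-1}^{\ast}-k_{n-(i+2)}H_{i+1}^{\ast}$ for $2\le i\le n-3$ together with the boundary relation $H_1^{\ast\prime}=-k_{n-3}H_2^{\ast}$ (both already derived inside Lemma 4.1 from Definition 4.1), and crucially the relation $H_{n-2}^{\ast\prime}=k_1 H_{n-3}^{\ast}$, which Lemma 4.1 supplies precisely because $\sum H_i^{\ast 2}$ is constant and $H_{n-2}^{\ast}\neq 0$. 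Once $X$ is known to be a unit constant vector with $\langle V_n,X\rangle=\cos\varphi$, Definition 2.3 gives that $\alpha$ is a $V_n$-slant helix.

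The main obstacle is the bookkeeping in the $D_{V_1}X=0$ computation: one must be careful that the $*$-harmonic curvatures are indexed "from the top" of the frame, so the Frenet derivative $V_{n-(i+1)}'=-k_{n-(i+2)}V_{n-(i+2)}+k_{n-(i+1)}V_{n-i}$ shifts indices in the direction opposite to the ordinary case, and the telescoping of the curvature terms must be checked at both ends (the $V_n$ end, governed by $H_1^{\ast\prime}=-k_{n-3}H_2^{\ast}$ and the $k_{n-1}$ term from $V_n'=-k_{n-1}V_{n-1}$, and the $V_1$ end, governed by $H_{n-2}^{\ast\prime}=k_1H_{n-3}^{\ast}$). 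This is exactly the dual of the "algebraic calculus" step in Theorem 3.2 and presents no conceptual difficulty once the index conventions are tracked consistently.
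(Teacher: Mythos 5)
Your proposal is correct and follows essentially the same route as the paper: the necessity direction via Proposition 4.1 and the unit-norm expansion of $X$ in the Frenet frame, the nonvanishing of $H_{n-2}^{\ast}$ by the same descending contradiction, and the sufficiency direction by constructing the candidate axis and verifying $D_{V_1}X=0$ using the recursion relations together with $H_{n-2}^{\ast\prime}=k_{1}H_{n-3}^{\ast}$ from Lemma 4.1. No gaps; your remarks on the index bookkeeping match what the paper's ``algebraic calculus'' step actually requires.
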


\begin{proof}
Let $\alpha $ be a $V_{n}$ -slant helix . According to the Definition 2.3,%
\begin{equation}
\left\langle V_{n},X\right\rangle =\cos (\varphi )=\text{constant,}
\end{equation}%
where $X$ the axis of $\alpha $. And, from Proposition 4.1., 
\begin{equation}
\left\langle V_{n-(i+1)},X\right\rangle =H_{i}^{\ast }\left\langle
V_{n},X\right\rangle \text{.}
\end{equation}%
for $1\leq i\leq n-2$. Moreover, from (4.10) and Frenet equations, we can
write $\left\langle V_{n-1},X\right\rangle =0$.Since the orthonormal system$%
\left\{ V_{1},V_{2},...,V_{n}\right\} $ is a basis of $\varkappa (E^{n})$
(tangent bundle), $X$ can be expressed in the form%
\begin{equation}
X=\sum\limits_{i=1}^{n}\left\langle V_{i},X\right\rangle V_{i}\text{.}
\end{equation}%
Hence, by using the equations (4.10), (4.11) and (4.12), we obtain%
\begin{equation*}
X=\cos (\varphi )V_{n}+\dsum\limits_{i=1}^{n-2}H_{i}^{\ast }\cos (\varphi
)V_{n-(i+1)}\text{.}
\end{equation*}%
Since $X$ is a unit vector field (see Definition 2.3), 
\begin{equation*}
\cos ^{2}(\varphi )+\dsum\limits_{i=1}^{n-2}H_{i}^{\ast 2}\cos ^{2}(\varphi
)=1
\end{equation*}%
and so%
\begin{equation*}
\sum\limits_{i=1}^{n-2}H_{i}^{\ast 2}=\tan ^{2}(\varphi )=\text{constant.}
\end{equation*}%
Now, we are going to show that $H_{n-2}^{\ast }\neq 0$. We assume that $%
H_{n-2}^{\ast }=0$. Then, for $i=n-2$ in (4.11), 
\begin{equation*}
\left\langle V_{1},X\right\rangle =H_{n-2}^{\ast }\left\langle
V_{n},X\right\rangle =0\text{.}
\end{equation*}%
So, $\left\langle D_{T}T,X\right\rangle =\left\langle
k_{1}V_{2},X\right\rangle =0$. We deduce that $\left\langle
V_{2},X\right\rangle =0$. On the other hand, for $i=n-3$ in (4.11),%
\begin{equation*}
\left\langle V_{2},X\right\rangle =H_{n-3}^{\ast }\left\langle
V_{n},X\right\rangle \text{.}
\end{equation*}%
And, since $\left\langle V_{2},X\right\rangle =0$, $H_{n-3}^{\ast }=0$.
Continuing this process, we get that $H_{1}^{\ast }=0$. Let us recall that $%
H_{1}^{\ast }=k_{n-1}/k_{n-2}$, thus we have a contradiction because all the
curvatures are nowhere zero. Consequently $H_{n-2}^{\ast }\neq 0$.

Conversely, we assume that $\sum\limits_{i=1}^{n-2}H_{i}^{\ast 2}=\tan
^{2}(\varphi )=$constant and $H_{n-2}^{\ast }\neq 0$. Then, consider the
vector field%
\begin{equation*}
X=\cos (\varphi )V_{n}+\sum\limits_{i=3}^{n}H_{i-2}^{\ast }\cos (\varphi
)V_{n-(i-1)}\text{.}
\end{equation*}%
We want to verify that $X$ is a constant along $\alpha $, i.e. $D_{V_{1}}X=0$%
. So,%
\begin{eqnarray*}
D_{V_{1}}X &=&D_{V_{1}}\left( \cos (\varphi )V_{n}\right)
+\sum\limits_{i=3}^{n}D_{V_{1}}\left( H_{i-2}^{\ast }\cos (\varphi
)V_{n-(i-1)}\right) \\
&=&\cos (\varphi )D_{V_{1}}V_{n}+\sum\limits_{i=3}^{n}\left( H_{i-2}^{\ast
\prime }\cos (\varphi )V_{n-(i-1)}+H_{i-2}^{\ast }\cos (\varphi
)D_{V_{1}}V_{n-(i-1)}\right) \\
&=&\cos (\varphi )(-k_{n-1}V_{n-1}+\sum\limits_{i=3}^{n-1}\left(
H_{i-2}^{\ast \prime }V_{n-(i-1)}-k_{n-i}H_{i-2}^{\ast
}V_{n-i}+k_{n-(i-1)}H_{i-2}^{\ast }V_{n-(i-2)}\right) + \\
&&H_{n-2}^{\ast \prime }V_{1}+k_{1}H_{n-2}^{\ast }V_{2})\text{.}
\end{eqnarray*}%
On the other hand, by using (4.2), we can write 
\begin{equation}
H_{i-2}^{\ast \prime }=k_{n-(i-1)}H_{i-3}^{\ast }-k_{n-i}H_{i-1}^{\ast }
\end{equation}%
for $4\leq i\leq n-1$ together with (4.3). Moreover, from Lemma 4.1, we know
that%
\begin{equation}
H_{n-2}^{\ast \prime }=k_{1}H_{n-3}^{\ast }.
\end{equation}%
Therefore, by using (4.3), (4.13) and (4.14) , an algebraic calculus shows
that $D_{V_{1}}X=0$. Since%
\begin{eqnarray*}
\left\Vert X\right\Vert &=&\cos ^{2}(\varphi
)+\sum\limits_{i=3}^{n}H_{i-2}^{\ast 2}\cos ^{2}(\varphi ) \\
&=&\cos ^{2}(\varphi )\left( 1+\sum\limits_{i=1}^{n-2}H_{i}^{\ast 2}\right)
\\
&=&\cos ^{2}(\varphi )\left( 1+\tan ^{2}(\varphi )\right) \\
&=&1\text{ ,}
\end{eqnarray*}%
$X$ is a unit vector field. Furthermore, $\left\langle V_{n},X\right\rangle
=\cos (\varphi )=$constant. Hence, we deduce that $\alpha $ is a $V_{n}$%
.-slant helix.
\end{proof}

\noindent \textbf{Remark 4.2. }The following corollary is the
reconfiguration of the Theorem 2 in \cite{gok}.

\begin{corollary}
Let $\left\{ V_{1},V_{2},...,V_{n}\right\} $ be the Frenet frame of the
curve $\alpha $ and let $\left\{ H_{1}^{\ast },H_{2}^{\ast
},...,H_{n-2}^{\ast }\right\} $ be the harmonic curvature functions of $%
\alpha $. Then, $\alpha $ is a $V_{n}$-slant helix (with the curvatures $%
k_{i}\neq 0$, $i=1,2,...,n-1$) in $E^{n}$ if and only if $H_{n-2}^{\ast
\prime }=k_{1}H_{n-3}^{\ast }$ and $H_{n-2}^{\ast }\neq 0$.
\end{corollary}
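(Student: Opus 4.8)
The plan is to deduce this corollary by simply chaining the two equivalences already established in Section~4, exactly as Corollary~3.1 is obtained from Theorem~3.2 and Lemma~3.1. First I would invoke Theorem~4.1: under the standing hypothesis $k_{i}\neq 0$ for $i=1,\dots,n-1$, the curve $\alpha$ is a $V_{n}$-slant helix if and only if $\sum_{i=1}^{n-2}H_{i}^{\ast 2}$ is constant and $H_{n-2}^{\ast}\neq 0$. Inspecting the proof of Theorem~4.1, this constant is precisely $\tan^{2}(\varphi)$.

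The one small point that needs attention is that Lemma~4.1 speaks of a \emph{nonzero} constant, so I would first observe that for a $V_{n}$-slant helix the constant $\sum_{i=1}^{n-2}H_{i}^{\ast 2}$ is automatically nonzero: if it vanished, then every $H_{i}^{\ast}$ would be identically zero, in particular $H_{1}^{\ast}=k_{n-1}/k_{n-2}=0$, contradicting $k_{n-1}\neq 0$. Hence in the conclusion of Theorem~4.1 the phrase ``$\sum_{i=1}^{n-2}H_{i}^{\ast 2}$ is constant and $H_{n-2}^{\ast}\neq 0$'' may be replaced, with no loss, by ``$\sum_{i=1}^{n-2}H_{i}^{\ast 2}$ is a nonzero constant and $H_{n-2}^{\ast}\neq 0$'', which is exactly the left-hand side of Lemma~4.1 (whose hypothesis $H_{n-2}^{\ast}\neq 0$ is already present).

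It then remains to splice the two biconditionals: $\alpha$ is a $V_{n}$-slant helix $\iff$ ($\sum_{i=1}^{n-2}H_{i}^{\ast 2}$ is a nonzero constant and $H_{n-2}^{\ast}\neq 0$) by Theorem~4.1 together with the remark above, and the bracketed statement $\iff$ ($H_{n-2}^{\ast\prime}=k_{1}H_{n-3}^{\ast}$ and $H_{n-2}^{\ast}\neq 0$) by Lemma~4.1. Composing these yields the stated equivalence. I expect no real obstacle here: the argument is a purely formal chaining of two already-proved equivalences, the only substantive bookkeeping being the observation that the constant appearing in Theorem~4.1 is nonzero, so that Lemma~4.1 is applicable in both directions.
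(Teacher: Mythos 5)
Your proposal is correct and follows exactly the paper's route: the published proof of this corollary is simply the one-line remark that it follows from Lemma~4.1 and Theorem~4.1, which is precisely the chaining you carry out. Your extra observation that the constant $\sum_{i=1}^{n-2}H_{i}^{\ast 2}=\tan^{2}(\varphi)$ is automatically nonzero (so that Lemma~4.1 genuinely applies) is a worthwhile detail the paper leaves implicit.
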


\begin{proof}
It is obvious by using Lemma 4.1. and Theorem 4.1.
\end{proof}

\section{Slant helices and their $G_{i}$ differentiable functions}

In this section, we reconfigure some known characterizations of slant
helices by using $G_{i}$ differentiable functions which is similar to
harmonic curvature functions.

\begin{definition}
Let $\alpha :I\rightarrow E^{n}$ be a unit speed curve (with the curvatures $%
k_{i}\neq 0$, $i=1,2,...,n-1$) in $E^{n}$. Define the functions 
\begin{equation}
G_{1}=\int k_{1}(s)ds\text{ , }G_{2}=1\text{ , }G_{3}=\frac{k_{1}}{k_{2}}%
G_{1}\text{ , }G_{i}=\frac{1}{k_{i-1}}\left[ k_{i-2}G_{i-2}+G_{i-1}^{\prime }%
\right]
\end{equation}%
where $4\leq i\leq n$ \cite{ahmad}.
\end{definition}

\begin{lemma}
Let $\alpha $ be a unit curve in $E^{n}$ and let $G_{n}\neq 0$ be for $i=n$
. Then, $G_{1}^{2}+G_{2}^{2}+...+G_{n}^{2}$ is a nonzero constant if and
only if $G_{n}^{%
{\acute{}}%
}=-k_{n-1}G_{n-1}$.
\end{lemma}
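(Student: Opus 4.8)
The plan is to mimic the proofs of Lemma 3.1 and Lemma 4.1, replacing the harmonic curvatures by the functions $G_i$ of Definition 5.1. First I would rewrite the recursion $G_i=\frac{1}{k_{i-1}}\left[k_{i-2}G_{i-2}+G_{i-1}'\right]$ as $k_{i-1}G_i=k_{i-2}G_{i-2}+G_{i-1}'$ for $4\le i\le n$ and shift the index by one to obtain the first-order relation
\begin{equation*}
G_j' = k_jG_{j+1}-k_{j-1}G_{j-1},\qquad 3\le j\le n-1 .
\end{equation*}
Then I would check that the \emph{same} identity persists for the two exceptional small indices: for $j=1$ we have $G_1'=k_1=k_1G_2$ since $G_2=1$; and for $j=2$ we have $G_2'=0$ while $k_2G_3-k_1G_1=k_1G_1-k_1G_1=0$ since $G_3=\frac{k_1}{k_2}G_1$. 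Hence, with the harmless convention $k_0G_0=0$, the relation $G_j'=k_jG_{j+1}-k_{j-1}G_{j-1}$ holds for every $1\le j\le n-1$.

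Next I would multiply the $j$th relation by $G_j$, giving $G_jG_j'=k_jG_jG_{j+1}-k_{j-1}G_{j-1}G_j$, and sum over $j=1,\dots,n-1$. Writing $a_j=k_jG_jG_{j+1}$ (so that $a_0=0$ and $k_{j-1}G_{j-1}G_j=a_{j-1}$), the right-hand side collapses to $\sum_{j=1}^{n-1}(a_j-a_{j-1})=a_{n-1}$, that is,
\begin{equation*}
\sum_{j=1}^{n-1}G_jG_j' = k_{n-1}G_{n-1}G_n .
\end{equation*}
This single telescoping identity will drive both implications.

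For the direct implication, assume $G_1^2+\dots+G_n^2$ is constant. Differentiating gives $\sum_{j=1}^{n}G_jG_j'=0$, hence $G_nG_n'=-\sum_{j=1}^{n-1}G_jG_j'=-k_{n-1}G_{n-1}G_n$ by the telescoping identity, and dividing by $G_n\neq 0$ yields $G_n'=-k_{n-1}G_{n-1}$. For the converse, assume $G_n'=-k_{n-1}G_{n-1}$; multiplying by $G_n$ and adding the telescoping identity gives $\sum_{j=1}^{n}G_jG_j'=0$, i.e. $\frac{d}{ds}\left(\sum_{i=1}^{n}G_i^2\right)=0$, so $\sum_{i=1}^{n}G_i^2$ is constant; it is automatically \emph{nonzero} because $G_2\equiv 1$ forces $\sum_{i=1}^{n}G_i^2\ge 1$.

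I expect the whole argument to be routine bookkeeping once the first-order relation is in place; the only point that needs genuine care is verifying that $G_j'=k_jG_{j+1}-k_{j-1}G_{j-1}$ really does extend down to $j=1,2$, where $G_1,G_2,G_3$ are prescribed by ad hoc formulas rather than by the general recursion — but the choices $G_2=1$ and $G_3=\frac{k_1}{k_2}G_1$ are exactly what is engineered to make this hold. Note also that, in contrast with Lemmas 3.1 and 4.1, here the word ``nonzero'' in ``nonzero constant'' comes for free from $G_2\equiv 1$, so no auxiliary contradiction argument about vanishing curvatures is needed.
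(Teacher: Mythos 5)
Your proof is correct and follows essentially the same telescoping argument as the paper's, the only difference being presentational: you fold the exceptional low indices $j=1,2$ into the uniform relation $G_j'=k_jG_{j+1}-k_{j-1}G_{j-1}$ (with $k_0G_0=0$), whereas the paper treats them separately via $G_2G_2'=0$ and $k_3G_3G_4=G_1G_1'+G_3G_3'$. Your remark that $G_2\equiv 1$ forces $\sum_{i=1}^{n}G_i^2\ge 1$ is a worthwhile addition, since the paper asserts the constant is nonzero without justification.
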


\begin{proof}
First, we assume that $G_{1}^{2}+G_{2}^{2}+...+G_{n}^{2}$ is a nonzero
constant. Consider the functions%
\begin{equation*}
G_{i}=\frac{1}{k_{i-1}}\left[ k_{i-2}G_{i-2}+G_{i-1}^{\prime }\right]
\end{equation*}%
for $5\leq i\leq n$. So, from the equality, we can write%
\begin{equation}
k_{i-1}G_{i}=G_{i-1}^{%
{\acute{}}%
}+k_{i-2}G_{i-2}\text{, }5\leq i\leq n\text{.}
\end{equation}%
Hence, in (5.2), if we take $i+1$ instead of $i$, we get%
\begin{equation}
G_{i}^{%
{\acute{}}%
}=k_{i}G_{i+1}-k_{i-1}G_{i-1}\text{, }4\leq i\leq n-1\text{.}
\end{equation}%
together with 
\begin{equation}
G_{1}=\int k_{1}(s)ds\text{ , }G_{2}=1\text{ , }G_{3}=\frac{k_{1}}{k_{2}}%
G_{1}
\end{equation}%
On the other hand, since $G_{1}^{2}+G_{2}^{2}+...+G_{n}^{2}$ is constant, we
have%
\begin{equation*}
G_{1}G_{1}^{%
{\acute{}}%
}+G_{2}G_{2}%
{\acute{}}%
+...+G_{n}G_{n}^{%
{\acute{}}%
}=0
\end{equation*}%
and so, 
\begin{equation}
G_{n}G_{n}^{%
{\acute{}}%
}=-G_{1}G_{1}^{%
{\acute{}}%
}-G_{2}G_{2}%
{\acute{}}%
-...-G_{n-1}G_{n-1}^{%
{\acute{}}%
}\text{.}
\end{equation}%
By using (5.3) and (5.4), we obtain%
\begin{equation}
G_{2}G_{2}^{\prime }=0\text{ \ and \ }k_{3}G_{3}G_{4}=G_{1}G_{1}^{\prime
}+G_{3}G_{3}^{\prime }
\end{equation}%
and%
\begin{equation}
G_{i}G_{i}^{%
{\acute{}}%
}=k_{i}G_{i}G_{i+1}-k_{i-1}G_{i-1}G_{i}\text{ , }4\leq i\leq n-1\text{.}
\end{equation}%
Therefore, by using (5.5), (5.6) and (5.7), a algebraic calculus shows that%
\begin{equation*}
G_{n}G_{n}^{%
{\acute{}}%
}=-k_{n-1}G_{n-1}G_{n}\text{ .}
\end{equation*}%
Since $G_{n}\neq 0$, we get the relation $G_{n}^{%
{\acute{}}%
}=-k_{n-1}G_{n-1}$.

Conversely, we assume that%
\begin{equation}
G_{n}^{%
{\acute{}}%
}=-k_{n-1}G_{n-1}\text{ .}
\end{equation}%
By using (5.8) and $G_{n}\neq 0$, we can write%
\begin{equation}
G_{n}G_{n}^{%
{\acute{}}%
}=-k_{n-1}G_{n-1}G_{n}\text{ .}
\end{equation}%
From (5.7), we have%
\begin{eqnarray*}
\text{for }i &=&n-1\text{, \ \ \ \ \ }G_{n-1}G_{n-1}^{%
{\acute{}}%
}=k_{n-1}G_{n-1}G_{n}-k_{n-2}G_{n-2}G_{n-1} \\
\text{for }i &=&n-2\text{, \ \ \ \ \ }G_{n-2}G_{n-2}^{%
{\acute{}}%
}=k_{n-2}G_{n-2}G_{n-1}-k_{n-3}G_{n-3}G_{n-2} \\
\text{for }i &=&n-3\text{, \ \ \ \ \ }G_{n-3}G_{n-3}^{%
{\acute{}}%
}=k_{n-3}G_{n-3}G_{n-2}-k_{n-4}G_{n-4}G_{n-3} \\
&&\cdot  \\
&&\cdot  \\
&&\cdot  \\
\text{for }i &=&4\text{, \ \ \ \ \ \ \ \ \ \ }G_{4}G_{4}^{%
{\acute{}}%
}=k_{4}G_{4}G_{5}-k_{3}G_{3}G_{4}
\end{eqnarray*}%
and so, from (5.9) and the last system, we have 
\begin{equation}
G_{4}G_{4}^{\prime }+G_{5}G_{5}^{\prime }+...+G_{n}G_{n}^{\prime
}=-k_{3}G_{3}G_{4}
\end{equation}%
by doing an algebraic calculus. On the other hand, from (5.6), we know that 
\begin{equation}
G_{2}G_{2}^{\prime }=0\text{ \ and \ }k_{3}G_{3}G_{4}=G_{1}G_{1}^{\prime
}+G_{3}G_{3}^{\prime }\text{ .}
\end{equation}%
Finally, from (5.10) and (5.11), we obtain%
\begin{equation}
G_{1}G_{1}^{%
{\acute{}}%
}+G_{2}G_{2}%
{\acute{}}%
+...+G_{n}G_{n}^{%
{\acute{}}%
}=0\text{ .}
\end{equation}%
And, by integrating (5.12), we can easily say that%
\begin{equation*}
G_{1}^{2}+G_{2}^{2}+...+G_{n}^{2}
\end{equation*}%
is a nonzero constant. This completes the proof.
\end{proof}

\begin{corollary}
Let $\alpha :I\subset 
\mathbb{R}
\rightarrow E^{n}$ be an arc-lengthed parameter curve with nonzero
curvatures $k_{i}$ ($1\leq i\leq n-1$) in $E^{n}$ and $X$ a unit constant
vector field of $%
\mathbb{R}
^{n}$. $\left\{ V_{1},V_{2},...,V_{n}\right\} $ denote the Frenet frame of
the curve $\alpha $. If $\alpha :I\subset 
\mathbb{R}
\rightarrow E^{n}$ is an $V_{2}$-slant helix with $X$ as its axis, then we
have for all $i=1,...,n$%
\begin{equation*}
\left\langle V_{i},X\right\rangle =G_{i}\left\langle V_{2},X\right\rangle .
\end{equation*}
\end{corollary}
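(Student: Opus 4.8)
The plan is to establish the identity $\langle V_i,X\rangle = G_i\langle V_2,X\rangle$ by induction on $i$, in exactly the same spirit as the proofs behind Theorem 3.1 and Proposition 4.1. The engine of the argument is that for a $V_2$-slant helix $\langle V_2,X\rangle = \cos(\varphi)$ is a nonzero constant (nonzero because $\varphi\neq\frac{\pi}{2}$); hence every time one differentiates an already-proved relation $\langle V_j,X\rangle = G_j\langle V_2,X\rangle$, the right-hand side contributes only $G_j^{\prime}\langle V_2,X\rangle$, and the left-hand side is rewritten via the Frenet formulas. Since all $k_i\neq 0$, the resulting linear relations can be solved for the next inner product.

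For the base cases I would treat $i=2$ and $i=1$. The case $i=2$ is immediate because $G_2=1$. For $i=1$, differentiate $\langle V_1,X\rangle$ and use $V_1^{\prime}=k_1V_2$ to get $\frac{d}{ds}\langle V_1,X\rangle = k_1\langle V_2,X\rangle = k_1\cos(\varphi)$; integrating gives $\langle V_1,X\rangle = \cos(\varphi)\int k_1(s)\,ds$, i.e. $\langle V_1,X\rangle = G_1\langle V_2,X\rangle$, once the integration constant in $G_1=\int k_1(s)\,ds$ is fixed appropriately — this normalization is precisely the way $G_1$ is set up in Definition 5.1. For the inductive step, assume $3\le i\le n$ and that the identity holds for all indices up to $i-1$. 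Differentiating $\langle V_{i-1},X\rangle = G_{i-1}\langle V_2,X\rangle$ and using the Frenet equation $V_{i-1}^{\prime} = -k_{i-2}V_{i-2}+k_{i-1}V_i$ (valid since $2\le i-1\le n-1$) together with the constancy of $\langle V_2,X\rangle$ yields $-k_{i-2}\langle V_{i-2},X\rangle + k_{i-1}\langle V_i,X\rangle = G_{i-1}^{\prime}\langle V_2,X\rangle$. Substituting the inductive hypothesis $\langle V_{i-2},X\rangle = G_{i-2}\langle V_2,X\rangle$ and dividing by $k_{i-1}\neq 0$ gives $\langle V_i,X\rangle = \frac{1}{k_{i-1}}\bigl[k_{i-2}G_{i-2}+G_{i-1}^{\prime}\bigr]\langle V_2,X\rangle$. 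For $i\ge 4$ the bracket is exactly $k_{i-1}G_i$ by Definition 5.1; for $i=3$ one has $G_2^{\prime}=0$, so the bracket equals $k_1G_1$ and $\langle V_3,X\rangle = \frac{k_1}{k_2}G_1\langle V_2,X\rangle = G_3\langle V_2,X\rangle$. In either case the identity holds for $i$, closing the induction.

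I do not expect a genuine obstacle: the statement follows by a routine induction driven entirely by the Frenet formulas and the recursion defining the $G_i$. The only point requiring a word of care is the handling of $G_1$, which is an indefinite integral; one must observe that $\langle V_1,X\rangle$ and $\cos(\varphi)\int k_1\,ds$ have the same derivative and therefore differ by an additive constant, and that $G_1$ is chosen as the distinguished antiderivative for which this constant vanishes (equivalently, one may simply take $G_1 := \langle V_1,X\rangle/\cos(\varphi)$). Everything else is a mechanical bookkeeping of indices, entirely parallel to the already-established Theorem 3.1 and Proposition 4.1.
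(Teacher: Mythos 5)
Your proof is correct and is essentially the argument the paper intends: the paper simply defers to the proof of Theorem 1.2 in Ahmad--Lopez, which is exactly this induction via the Frenet formulas and the recursion defining the $G_i$. Your remark about fixing the integration constant in $G_1=\int k_1(s)\,ds$ so that the base case $i=1$ holds is a genuine subtlety that the paper glosses over, and you handle it appropriately.
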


\begin{proof}
It is obvious by using the proof of Theorem 1.2 in \cite{ahmad}.
\end{proof}

\noindent \textbf{Remark 5.1. }The following Theorem is the new version of
the Theorem 1.2 in \cite{ahmad}.

\begin{theorem}
Let $\left\{ V_{1},V_{2},...,V_{n}\right\} $ be the Frenet frame of the
curve $\alpha $ . Then, $\alpha $ is a $V_{2}$-slant helix (with the
curvatures $k_{i}\neq 0$, $i=1,2,...,n-1$) in $E^{n}$ if and only if $%
\sum\limits_{i=1}^{n}G_{i}^{2}=$constant and $G_{n}\neq 0$. Here, 
\begin{equation*}
G_{1}=\int k_{1}(s)ds\text{ , }G_{2}=1\text{ , }G_{3}=\frac{k_{1}}{k_{2}}%
G_{1}\text{ , }G_{i}=\frac{1}{k_{i-1}}\left[ k_{i-2}G_{i-2}+G_{i-1}^{\prime }%
\right]
\end{equation*}%
where $4\leq i\leq n$.
\end{theorem}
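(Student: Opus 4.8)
The plan is to mirror exactly the structure of the proof of Theorem 3.2 (and Theorem 4.1), replacing the harmonic curvature functions $H_i$ by the $G_i$ functions and using Corollary 5.1 in place of Theorem 3.1 / Proposition 4.1, and Lemma 5.1 in place of Lemma 3.1 / Lemma 4.1. For the necessity direction, suppose $\alpha$ is a $V_2$-slant helix with unit axis $X$, so $\langle V_2,X\rangle=\cos\varphi$ is a nonzero constant. By Corollary 5.1 we have $\langle V_i,X\rangle = G_i\langle V_2,X\rangle$ for all $i=1,\dots,n$. Expanding $X$ in the orthonormal Frenet basis, $X=\sum_{i=1}^n\langle V_i,X\rangle V_i = \cos\varphi\sum_{i=1}^n G_i V_i$, and taking norms gives $1=\cos^2\varphi\sum_{i=1}^n G_i^2$, hence $\sum_{i=1}^n G_i^2=\sec^2\varphi$ is constant. (Note $G_2=1$ already sits in this sum, consistent with $\langle V_2,X\rangle=\cos\varphi$.)

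Next I would show $G_n\neq 0$ by the same contradiction argument as in Theorem 3.2: if $G_n=0$, then $\langle V_n,X\rangle=G_n\langle V_2,X\rangle=0$; differentiating along the curve and using the Frenet equation $V_n'=-k_{n-1}V_{n-1}$ gives $\langle V_{n-1},X\rangle=0$ (here one uses $k_{n-1}\neq0$), and then Corollary 5.1 for $i=n-1$ forces $G_{n-1}=0$; iterating downward yields $G_3=0$, and since $G_3=(k_1/k_2)G_1$ with $k_1,k_2\neq0$ this gives $G_1=0$; but $G_1'=k_1\neq0$, so $G_1$ is nonconstant and in particular cannot vanish identically on $I$ — contradiction. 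Hence $G_n\neq0$.

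For the converse, assume $\sum_{i=1}^n G_i^2=\sec^2\varphi$ is a nonzero constant and $G_n\neq0$. Define the candidate axis $X=\cos\varphi\sum_{i=1}^n G_i V_i$ and verify $D_{V_1}X=0$ by differentiating term by term, substituting the recursion relation (5.3) for $G_i'$ with $3\le i\le n-1$ together with the boundary data (5.4), and substituting $G_n'=-k_{n-1}G_{n-1}$ from Lemma 5.1 for the top term; the Frenet formulas then make all coefficients telescope to zero, exactly as in the proof of Theorem 3.2. Then $\|X\|^2=\cos^2\varphi\sum_{i=1}^n G_i^2=\cos^2\varphi\sec^2\varphi=1$, so $X$ is a unit constant vector, and $\langle V_2,X\rangle=\cos\varphi G_2=\cos\varphi$ is constant, so $\alpha$ is a $V_2$-slant helix.

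The main obstacle is the $D_{V_1}X=0$ computation: one must carefully handle the nonuniform bottom of the $G$-recursion ($G_1$ with $G_1'=k_1$, $G_2=1$ so $G_2'=0$, $G_3=(k_1/k_2)G_1$) so that the $V_1$, $V_2$, $V_3$ coefficients cancel correctly alongside the generic cancellation for $V_i$ with $4\le i\le n-1$ and the top $V_n$ term governed by Lemma 5.1. This is the analogue of invoking (3.3), (3.13), (3.14) in Theorem 3.2, but shifted because the $G_i$ are indexed $1$ through $n$ rather than $1$ through $n-2$; once the indexing is aligned the cancellation is routine bookkeeping. The necessity direction and the norm computation are immediate given Corollary 5.1 and Lemma 5.1.
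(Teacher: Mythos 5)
Your proposal follows essentially the same route as the paper's own proof: necessity via Corollary 5.1 and the norm of the Frenet expansion of $X$, the non-vanishing of $G_{n}$ by the same downward contradiction argument, and the converse by constructing $X=\cos (\varphi )\sum_{i=1}^{n}G_{i}V_{i}$ and checking $D_{V_{1}}X=0$ with the recursion and Lemma 5.1. If anything you are slightly more explicit than the paper, which does not name Lemma 5.1 in its converse even though the relation $G_{n}^{\prime }=-k_{n-1}G_{n-1}$ is exactly what kills the $V_{n}$-coefficient, and which states the final contradiction ($G_{3}=0$) less carefully than your observation that $G_{1}^{\prime }=k_{1}\neq 0$ forbids $G_{1}\equiv 0$.
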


\begin{proof}
Let $\alpha $ be a $V_{2}$-slant helix. According to the Definition 2.2,%
\begin{equation}
\left\langle V_{2},X\right\rangle =\cos (\varphi )=\text{constant,}
\end{equation}%
where $X$ the axis of $\alpha $. And, from Corollary 5.1., 
\begin{equation}
\left\langle V_{i},X\right\rangle =G_{i}\left\langle V_{2},X\right\rangle
\end{equation}%
for $1\leq i\leq n$. Since the orthonormal system$\left\{
V_{1},V_{2},...,V_{n}\right\} $ is a basis of $\varkappa (E^{n})$ (tangent
bundle), $X$ can be expressed in the form%
\begin{equation}
X=\sum\limits_{i=1}^{n}\left\langle V_{i},X\right\rangle V_{i}\text{.}
\end{equation}%
Hence, by using the equations (5.13), (5.14) and (5.15), we obtain%
\begin{equation*}
X=\dsum\limits_{i=1}^{n}G_{i}\cos (\varphi )V_{i}\text{.}
\end{equation*}%
Since $X$ is a unit vector field (see Definition 2.2),%
\begin{equation*}
\cos ^{2}(\varphi )\left( \dsum\limits_{i=1}^{n}G_{i}^{2}\right) =1
\end{equation*}%
and so, 
\begin{equation*}
\dsum\limits_{i=1}^{n}G_{i}^{2}=\frac{1}{\cos ^{2}(\varphi )}=\text{constant
.}
\end{equation*}%
Now, we are going to show that $G_{n}\neq 0$. We assume that $G_{n}=0$.
Then, for $i=n$ in (5.14),%
\begin{equation*}
\left\langle V_{n},X\right\rangle =G_{n}\left\langle V_{2},X\right\rangle =0%
\text{.}
\end{equation*}%
So, $\left\langle D_{T}V_{n},X\right\rangle =\left\langle
-k_{n-1}V_{n-1},X\right\rangle =0$. We deduce that $\left\langle
V_{n-1},X\right\rangle =0$. On the other hand, for $i=n-1$ in (5.14),%
\begin{equation*}
\left\langle V_{n-1},X\right\rangle =G_{n-1}\left\langle
V_{2},X\right\rangle \text{.}
\end{equation*}%
And, since $\left\langle V_{n-1},X\right\rangle =0$, $G_{n-1}=0$. Continuing
this process, we get that $G_{3}=0$. Let us recall that $G_{3}=\dfrac{k_{1}}{%
k_{2}}\int k_{1}\left( s\right) ds$, thus we have a contradiction because
all the curvatures are nowhere zero. Consequently $G_{n}\neq 0$.

Conversely, we assume that $\sum\limits_{i=1}^{n}G_{i}^{2}=\dfrac{1}{\cos
^{2}(\varphi )}=$constant and $G_{n}\neq 0$. Then, consider the vector field%
\begin{equation*}
X=\dsum\limits_{i=1}^{n}G_{i}\cos (\varphi )V_{i}\text{.}
\end{equation*}%
Then, by taking account 
\begin{equation*}
G_{1}=\int k_{1}(s)ds\text{ , }G_{2}=1\text{ , }G_{3}=\frac{k_{1}}{k_{2}}%
G_{1}\text{ , }G_{i}=\frac{1}{k_{i-1}}\left[ k_{i-2}G_{i-2}+G_{i-1}^{\prime }%
\right] \text{ , }4\leq i\leq n
\end{equation*}%
and Frenet equations, an algebraic calculus shows that $D_{V_{1}}X=0$. That
is, $X$ is a constant along $\alpha $. Also, since 
\begin{eqnarray*}
\left\Vert X\right\Vert &=&\sum\limits_{i=1}^{n}G_{i}^{2}\cos ^{2}(\varphi )
\\
&=&\cos ^{2}(\varphi )\left( \sum\limits_{i=1}^{n}G_{i}^{2}\right) \\
&=&\cos ^{2}(\varphi )\frac{1}{\cos ^{2}(\varphi )} \\
&=&1\text{ ,}
\end{eqnarray*}%
$X$ is a unit vector field. Furthermore, $\left\langle V_{2},X\right\rangle
=\cos (\varphi )=$constant. Hence, we deduce that $\alpha $ is a $V_{2}$%
.-slant helix.
\end{proof}

\noindent \textbf{Remark 5.2. }The following corollary is the
reconfiguration of the Theorem 3.1 in \cite{ahmad}.

\begin{corollary}
Let $\left\{ V_{1},V_{2},...,V_{n}\right\} $ be the Frenet frame of the
curve $\alpha $ . Then, $\alpha $ is a $V_{2}$-slant helix in $E^{n}$ if and
only if $G_{n}^{%
{\acute{}}%
}=-k_{n-1}G_{n-1}$ and $G_{n}\neq 0$, where the functions $\left\{
G_{1},G_{2},...,G_{n}\right\} $ defined in (5.1).
\end{corollary}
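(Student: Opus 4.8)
The plan is to obtain this corollary immediately by chaining Lemma 5.1 with Theorem 5.1, exactly in the spirit of how Corollary 3.1 and Corollary 4.1 followed from their respective lemma/theorem pairs. First I would assume that $\alpha$ is a $V_2$-slant helix. Then Theorem 5.1 gives that $\sum_{i=1}^{n} G_i^2$ is constant and $G_n \neq 0$. Since $G_2 = 1$ by the definition (5.1), this constant satisfies $\sum_{i=1}^{n} G_i^2 \geq 1 > 0$, so it is in fact a \emph{nonzero} constant; this is the one small bookkeeping point needed to invoke Lemma 5.1, whose statement is phrased for a nonzero constant. Applying Lemma 5.1 (using $G_n \neq 0$), we conclude $G_n' = -k_{n-1} G_{n-1}$, and $G_n \neq 0$ is already in hand.

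Conversely, I would assume $G_n' = -k_{n-1} G_{n-1}$ together with $G_n \neq 0$. Lemma 5.1 then yields that $\sum_{i=1}^{n} G_i^2$ is a nonzero constant, hence in particular a constant, and Theorem 5.1 (again with $G_n \neq 0$) gives that $\alpha$ is a $V_2$-slant helix. This completes the equivalence in both directions.

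The only subtlety worth flagging — and it is genuinely minor — is the mismatch between the "nonzero constant" hypothesis/conclusion in Lemma 5.1 and the plain "constant" appearing in Theorem 5.1; as noted above, the presence of the summand $G_2^2 = 1$ forces every such sum to be at least $1$, so the two statements dovetail with no extra work. No new computation is required beyond citing Lemma 5.1 and Theorem 5.1.
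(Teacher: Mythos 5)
Your proposal is correct and matches the paper's own proof, which simply cites Lemma 5.1 and Theorem 5.1; your extra remark that $G_2^2=1$ forces the sum to be a nonzero constant is a small but worthwhile clarification the paper leaves implicit.
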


\begin{proof}
It is obvious by using Lemma 5.1. and Theorem 5.1.
\end{proof}

\textbf{Acknowledgment.}

The last author would like to thank Tubitak-Bideb for their financial
supports during his PhD studies and the authors thank Proffessor H. H. Hac\i
saliho\u{g}lu for several useful remarks on this paper.

\end{document}